\newtheorem{theorem}{Theorem}
\newtheorem{proposition}[theorem]{Proposition}
\theoremstyle{definition}
\newtheorem{definition}[theorem]{Definition}
\newcommand{\s}{\mathscr{S}}
\newcommand{\G}{\mathscr{G}}
\newcommand{\X}{\mathscr{X}}
\newcommand{\Y}{\mathscr{Y}}
\newcommand{\yobs}{y^{\text{obs}}}
\newcommand{\ivj}{i\stackrel{\G}{\sim} j}
\newcommand{\Z}{Z(\G,\beta)}
\newcommand{\indxixj}{\mathds{1}\{x_i=x_j\}}
\newcommand{\indxyi}{\mathds{1}\{x_i=y_i\}}
\newcommand{\nref}{n_{\text{REF}}}
\newcommand{\HPM}{\text{HPM}(\G,\alpha,\beta)}
\newcommand{\HPMf}{\text{HPM}(\G_4,\alpha,\beta)}
\newcommand{\HPMh}{\text{HPM}(\G_8,\alpha,\beta)}
\newcommand{\GGy}{\Gamma(\G,y)}
\newcommand{\dd}{\mathrm{d}}
\newcommand{\prob}{\mathbb{P}}
\begin{document}

\title{\bf Adaptive ABC model choice and geometric summary statistics for hidden Gibbs random fields}



%



\date{Received: date / Accepted: date}
\author{
Julien Stoehr\footnote{I3M -- UMR CNRS 5149, 
Universit\'e Montpellier II, France},
\and Pierre Pudlo$^{\ast}$
\and Lionel Cucala$^{\ast}$ 
}

\date{\today}

\maketitle

\begin{abstract}
  Selecting between different dependency structures of hidden Markov
  random field can be very challenging, due to the intractable
  normalizing constant in the likelihood. We answer this question with
  approximate Bayesian computation (ABC) which provides a model choice
  method in the Bayesian paradigm. This comes after the work of
  \citet{grelaud09} who exhibited sufficient statistics on directly
  observed Gibbs random fields. But when the random field is latent,
  the sufficiency falls and we complement the set with geometric
  summary statistics. The general approach to construct these
  intuitive statistics relies on a clustering analysis of the sites
  based on the observed colors and plausible latent graphs. 
  The efficiency of ABC model choice based on these statistics is
  evaluated via a local error rate which may be of independent
  interest. As a byproduct we derived an ABC algorithm that adapts
  the dimension of the summary statistics to
  the dataset without
  distorting the model selection.

\medskip
\noindent{\bf Keywords:} Approximate Bayesian Computation, model
choice, hidden Gibbs random fields, summary statistics,
misclassification rate, $k$-nearest neighbors
\end{abstract}

\section{Introduction}

Gibbs random fields are polymorphous statistical models, that are
useful to analyse different types of spatially correlated data, with a
wide range of applications, including image analysis \citep{hurn03},
disease mapping \citep{green02}, genetic analysis \citep{francois06}
among others.  The autobinomial model \citep{besag74} which
encompasses the Potts model, is used to describe the spatial
dependency of discrete random variables (\textit{e.g.}, shades of grey
or colors) on the vertices of an undirected graph (\textit{e.g.}, a
regular grid of pixels). See for example \citet{alfo08} and
\citet*{moores14} who performed image segmentation with the help of
the above modeling.  Despite their popularity, these models present
major difficulties from the point of view of either parameter
estimation \citep{frielcache, frielproc, everitt12} or model choice
\citep{grelaud09, frielevidence, cucala13}, due to an intractable
normalizing constant. Remark the exception of small latices on which
we can apply the recursive algorithm of \citet{reeves04, friel07} and
obtain a reliable approximation of the normalizing constant. However,
the complexity in time of the above algorithm grows exponentially and
is thus helpless on large lattices.

The present paper deals with the challenging problem of selecting a
dependency structure of an hidden Potts model in the Bayesian paradigm
and explores the opportunity of approximate Bayesian computation
\citep[ABC,][]{tavare97,pritchard99, surveyABC, baragatti14} to answer
the question. Up to our knowledge, this important question has not yet
been addressed in the Bayesian literature. Alternatively we could have
tried to set up a reversible jump Markov chain Monte Carlo, but
follows an important work for the statistician to adapt the general
scheme, as shown by \citet{caimo11, caimo2013} in the context of
exponential random graph models where the observed data is a graph.
\citet{cucala13} addressed the question of inferring the number of
latent colors with an ICL criterion but their complex algorithm cannot
be extended easily to choose the dependency structure.  Other
approximate methods have also been tackled in the literature such as
pseudo-likelihoods \citep{besag75}, mean field approximations
\citep{forbes03} but lacks theoretical support.

Approximate Bayesian computation (ABC) is a simulation based approach
that can addresses the model choice issue in the Bayesian paradigm.
The algorithm compares the observed data $\yobs$ with numerous
simulations $y$ through summary statistics $S(y)$ in order to supply a
Monte Carlo approximation of the posterior probabilities of each
model. The choice of such summary statistics presents major
difficulties that have been especially highlighted for model choice
\citep{robert11, didelot2011}. Beyond the seldom situations where
sufficient  statistics exist and are explicitly known
\citep[Gibbs random fields are surprising examples, see][]{grelaud09},
\citet{marin14} provide conditions which ensure the consistency of ABC
model choice. The present work has thus to answer the absence of available
sufficient statistics for hidden Potts fields as well as the difficulty (if not
the impossibility) to check the above theoretical conditions in practice.

Recent articles have proposed automatic schemes to construct theses
statistics (rarely from scratch but based on a large set of
candidates) for Bayesian parameter inference and are meticulously
reviewed by \citet{blum2013} who compare their performances in
concrete examples. But very few has been accomplished in the context
of ABC model choice apart from the work of \citet{prangle14}. The
statistics $S(y)$ reconstructed by \citet{prangle14} have good
theoritical properties (those are the posterior probabilities of the
models in competition) but are poorly approximated with a pilot ABC
run \citep{robert11}, which is also time consuming.

The paper is organized as follows: Section \ref{sec:ABC} presents ABC
model choice as a $k$-nearest neighbor classifier, and defines a local
error rate which is the first contribution of the paper. We also
provide an adaptive ABC algorithm based on the local error to select
automatically the dimension of the summary statistics. The second
contribution is the introduction of a general and intuitive approach
to produce geometric summary statistics for hidden Potts model in
Section~\ref{sec:HPM}. We end the paper with numerical results in that
framework.


\section{Local error rates and adpative ABC model choice}
\label{sec:ABC}
When dealing with models whose likelihood cannot be computed
analytically, Bayesian model choice becomes challenging since the
evidence of each model writes as the integral of the likelihood over
the prior distribution of the model parameter. ABC provides a method
to escape from the intractability problem and relies on many simulated
datasets from each model either to learn the model that fits the
observed data $\yobs$ or to approximate the posterior probabilities.
We refer the reader to reviews on ABC \citep{surveyABC, baragatti14}
to get a wider presentation and will focus here on the model choice
procedure.

\subsection{Background on Approximate Bayesian computation for model choice}
\label{subsec-abc}

Assume we are given $M$ stochastic models with respective paramater
spaces embedded into Euclidean spaces of various dimensions. The joint
Bayes\-ian distribution sets
\begin{itemize}
\item[\emph{(i)}] a prior on the model space, $\pi(1),\ldots,\pi(M)$,
\item[\emph{(ii)}] for each model, a prior on its parameter
  space, whose density with respect to a reference measure (often the
  Lebesgue measure of the Euclidean space) is $ \pi_m(\theta_m) $ and
\item[\emph{(iii)}] the likelihood of the data $y$ within each model,
  namely $f_m(y|\theta_m)$.
\end{itemize}
The evidence of model $m$ is then
defined as
\[
e(m,y):=\int f_m(y|\theta_m)\pi_m(\theta_m)\dd \theta_m
\]
and the posterior probability of model $m$ as 
\begin{equation}\label{eq:pp}
\pi(m|y)=\frac{\pi(m)e(m,y)}{\sum_{m'}\pi(m')e(m',y)}.
\end{equation}
When the goal of the Bayesian analysis is the selection of the model that best fits the observed data
$\yobs$,
it is performed through the maximum a posteriori (MAP) defined by 
\begin{equation}
\widehat{m}_\text{MAP}(\yobs)=\operatorname{arg\,max}_m \pi(m|\yobs).
\label{eq:bayes.classifier}
\end{equation}
The latter  can be seen as a classification problem predicting
the model number given the observation of $y$. From this standpoint,
$\widehat{m}_\text{MAP}$ is the Bayes classifier, well known to minimize
the 0-1 loss \citep{devroye:gyorfi:lugosi:1996}.
One might argue that $\widehat{m}_\text{MAP}$ is an estimator defined
as the mode of the posterior probabilities which form the density of
the posterior with respect to the counting measure. But the counting
measure, namely $\delta_{1}+\cdots+\delta_{M}$, is a canonical
reference measure, since it is invariant to any permutation of
$\{1,\ldots,M\}$ whereas no such canonical reference measure
(invariant to one-to-one transformation) exists on
compact subset of the real line. Thus \eqref{eq:bayes.classifier} does
not suffer from the drawbacks of posterior mode estimators
\citep{druilhet:marin:2007}. 

To approximate $\widehat{m}_\text{MAP}$, ABC starts by simulating
numerous triplets $(m,\theta_m,y)$ from the joint Bayesian
model. Afterwards, the algorithm mimics the Bayes classifier
\eqref{eq:bayes.classifier}: it approximates the posterior
probabilities by the frequency of each model number associated with
simulated $y$'s in a neighborhood of $\yobs$. If required, we can
eventually predict the best model with the most frequent model in the
neighborhood, or, in other words, take the final decision by plugging
in \eqref{eq:bayes.classifier} the approximations of the posterior
probabilities.

If directly applied, this first, naive algorithm faces the curse of
dimensionality, as simulated datas\-ets $y$ can be complex objects and
lie in a space of high dimension (\textit{e.g.}, numerical
images). Indeed, finding a simulated dataset in the vicinity of
$\yobs$ is almost impossible when the ambient dimension is high. The
ABC algorithm performs therefore a (non linear) projection of the
observed and simulated datasets onto some Euclidean space of
reasonable dimension via a function $S$, composed of summary
statistics. Moreover, due to obvious reasons regarding
computer memory, instead of keeping track of the whole simulated datasets, 
one commonly saves only the simulated vectors of
summary statistics, which leads to a table composed of iid replicates
$(m,\theta_m, S(y))$, often called the reference table in the ABC
literature, see Algorithm~\ref{algo:reftable}.

\begin{algorithm2e}
  \caption{Simulation of the ABC reference table}
  \label{algo:reftable}
  \KwOut{A reference table of size $\nref$ }
  \medskip
  
  \For{$j\leftarrow 1$ \KwTo $\nref$}{
    \textbf{draw} $m$ from the prior $\pi$\; 
    \textbf{draw} $\theta$ from the prior $\pi_m$\; 
    \textbf{draw} $y$ from the likelihood $f_m(\cdot\vert\theta)$\; 
    \textbf{compute} $S(y)$\; 
    \textbf{save} $(m_j, \theta_j, S(y_j))\leftarrow(m,\theta,S(y))$\;
  }
  \textbf{return} the table of $(m_j, \theta_j, S(y_j))$, $j=1,\ldots, \nref$
\end{algorithm2e}


From the standpoint of machine learning, the reference table serves as
a training database composed of iid replicates drawn from the distribution
of interest, namely the joint Bayesian model. The regression problem
of estimating the posterior probabilities or the classification
problem of predicting a model number are both solved with
nonparametric methods. The neighborhood of $\yobs$ is thus defined as
simulations whose distances to the observation measured in terms of
summary statistics, \textit{i.e.}, $\rho(S(y), S(\yobs))$, fall below
a threshold $\varepsilon$ commonly named the tolerance level. The
calibration of $\varepsilon$ is delicate, but had been partly
neglected in the papers dealing with ABC that first focused on
decreasing the total number of simulations via the recourse to Markov chain Monte Carlo  \citep{marjoram:etal:2003} or sequential Monte
Carlo methods  \citep{beaumont2009adaptive, delmoral:doucet:jasra:2009} 
whose common target is the joint Bayesian distribution
conditioned by $\rho(S(y), S(\yobs)) \le \varepsilon$ for a given
$\varepsilon$. By contrast, the simple setting we adopt here reveals
the calibration question. In accepting the machine learning viewpoint,
we can consider the ABC algorithm as a $k$-nearest neighbor (knn)
method, see \citet{biau13}; the calibration of $\varepsilon$ is thus
transformed into the calibration of $k$. The Algorithm we have to
calibrate is given in Algorithm~\ref{algo:ABC}.


\begin{algorithm2e}
  \caption{Uncalibrated ABC model choice }
  \label{algo:ABC}
  \KwOut{A sample of size $k$ distributed according to
    the ABC approximation of the posterior}

  \medskip
  \textbf{simulate} the reference table $\mathscr T$ according to Algorithm~\ref{algo:reftable}\;
  \textbf{sort} the replicates of $\mathscr T$ according to $\rho(S(y_j),S(\yobs))$\;
  \textbf{keep} the $k$ first replicates\;
  \textbf{return} the relative frequencies of each model among the $k$ first
  replicates and the most frequent model\;
\end{algorithm2e}


Before entering into the tuning of $k$, we highlight that the
projection via the summary statistics generates a difference with the
standard knn methods. Under mild conditions, knn are consistent
nonparametric methods. Consequently, as the size of the reference table tends
to infinity, the relative frequency of model $m$ returned by
Algorithm~\ref{algo:ABC} converges to 
\[
\pi(m | S(\yobs)).
\]
Unfortunately, when the summary statistics are not sufficient for the model
choice problem, \citet{didelot2011} and \citet{robert11} found that
the above probability can greatly differ from the genuine $\pi(m|\yobs)$.
Afterwards \citet{marin14} provide necessary
and sufficient conditions on $S(\cdot)$ for the consistency of the MAP
based on $\pi(m | S(\yobs))$ when the information included in the dataset 
$\yobs$ increases, \textit{i.e.} when the dimension of 
$\yobs$ tends to infinity. Consequently, the problem that ABC 
addresses with reliability is classification, and the mentioned
theoretical results requires a shift from the approximation of
posterior probabilities. Practically the frequencies returned by
Algorithm~\ref{algo:ABC} should solely be used to order the 
models with respect to their fits to $\yobs$ and construct a knn classifier
$\widehat{m}$ that predicts the model number.

It becomes therefore obvious that the calibration of $k$ should be
done by minimizing the misclassification error rate of the resulting
classifier $\widehat{m}$.  This indicator is the expected value of the
0-1 loss function, namely $\mathbf 1\{\widehat{m}(y)\neq m\}$, over a
random $(m,y)$ distributed according to the marginal (integrated in
$\theta_m$) of the joint Bayesian distribution, whose density in $(m,y)$ writes
\begin{equation} \label{eq:prior.predictive}
\pi(m) \int f_m(y|\theta_m)\pi_m(\theta_m) \dd\theta_m.
\end{equation}
Ingenious solutions have been already proposed and are now well
established to fullfil this minimization goal and bypass the
overfitting problem, based on cross-validation on the learning
database. But, for the sake of clarity, particularly in the following
sections, we decided to take advantage of the fact that ABC aims at
learning on simulated databases, and we will use a validation reference table,
simulated also with Algorithm~\ref{algo:reftable}, but independently of the
training reference table, to evaluate the misclassification rate with
the averaged number of  differences between the true model numbers $m_j$ and
the predicted values $\widehat{m}(y_j)$ by knn (\textit{i.e.} by ABC) on
the validation reference table.

\subsection{Local error rates}
\label{sub:local}

The misclassification rate $\tau$ of the knn classifier $\hat{m}$ at
the core of Algorithm~\ref{algo:ABC} provides consistant evidence of
its global accuracy. It supplies indeed a well-known support to
calibrate $k$ in Algorithm~\ref{algo:ABC}.
The purpose of ABC model choice methods though is the analyse of an
observed dataset $\yobs$ and this first indicator is irrelevant to
assess the accuracy of the classifier at this precise point of the
data space, since it is by nature a prior gauge. We propose here to
disintegrate this indicator, and to rely on conditional expected value
of the misclassification loss $\mathbf 1\{\widehat{m}(y)\neq m\}$ knowing
$y$ as an evaluation of the efficiency of the classifier at $y$. We
recall the following proposition whose proof is easy, but might help
clarifying matters when applied to the joint distribution
\eqref{eq:prior.predictive}.
\begin{proposition} \label{pro:disintegrate}
  Consider a classifier $\hat{m}$ that aims at predicting $m$ given $y$
  on data drawn from the joint distribution $f(m,y)$. Let $\tau$ be the
  misclassification rate of $\hat{m}$, defined by
  $\prob(\hat{m}(Y)\neq\mathscr M)$, where $(\mathscr M, Y)$ is a random
  pair with distribution $f$ under the probability measure $\prob$. Then,
  \emph{(i)} the expectation of the loss function is
  \[
  \tau  = \sum_m\int_y \mathbf 1\{\widehat{m}(y)\neq m\} f(m,y)\, \dd y.
  \]
  Additionally,  \emph{(ii)}, the conditional expectation knowing $y$, namely
  $\tau(y)=\prob\big(\hat{m}(Y) \neq \mathscr M\big| Y=y\big)$, is
  \begin{equation}\label{eq:raw.disintegrate}
  \tau(y) = \sum_m \mathbf 1\{\widehat{m}(y)\neq m\}
    f(m|y)
  \end{equation}
  and $\tau = \int_y f(y) \tau(y) \, \dd y$, where $f(y)$ denotes the
  marginal distribution of $f$ (integrated over $m$) and $f(m|y)=
  f(m,y)/f(y)$ the conditional probability of $m$ given
  $y$. Furthermore, we have
  \begin{equation}
    \label{eq:raw.disintegrate2}
    \tau(y) = 1 - f(\widehat{m}(y)\,|\,y).
  \end{equation}
\end{proposition}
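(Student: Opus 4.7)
The statement is essentially a textbook disintegration result applied to the prior-predictive joint distribution, so the plan is to unwind the definitions and use Fubini together with elementary conditional-probability manipulations. There is no real obstacle; the only care needed is to keep the notation between the abstract $(m,y)$ setting and the ABC-specific $f(m,y) = \pi(m) \int f_m(y|\theta_m)\pi_m(\theta_m)\,\dd\theta_m$ coherent.

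For part \emph{(i)}, I would start directly from the definition $\tau = \prob(\hat m(Y) \neq \mathscr M)$ and rewrite it as the expectation of the 0--1 loss $\mathbf 1\{\hat m(Y) \neq \mathscr M\}$ under $\prob$. Since $(\mathscr M, Y)$ has joint density $f(m,y)$ with respect to the product of the counting measure on $\{1,\dots,M\}$ and the reference measure on the $y$-space, I simply write the expectation as a sum over $m$ of integrals in $y$, which yields the announced formula.

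For part \emph{(ii)}, I would factor $f(m,y) = f(m\mid y) f(y)$ and apply Fubini to split the double sum/integral from \emph{(i)}. Pulling $f(y)$ out of the sum over $m$ identifies the inner sum as the quantity in \eqref{eq:raw.disintegrate}, which is exactly the conditional expectation $\prob(\hat m(Y) \neq \mathscr M \mid Y = y)$ by the standard definition of conditional expectation for a discrete-valued random variable given a continuous one. The relation $\tau = \int_y f(y)\tau(y)\,\dd y$ is then the tower property, or equivalently a direct restatement of what the factorization produced.

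For part \emph{(iii)}, I would rewrite
\[
\tau(y) = \sum_m \mathbf 1\{\hat m(y) \neq m\} f(m\mid y)
= \sum_m f(m\mid y) - \sum_m \mathbf 1\{\hat m(y) = m\} f(m\mid y).
\]
Since $\sum_m f(m\mid y) = 1$ and the second sum collapses to the single term $f(\hat m(y)\mid y)$ (because $\hat m(y)$ takes a unique value in $\{1,\dots,M\}$), one obtains $\tau(y) = 1 - f(\hat m(y)\mid y)$. No step is delicate; the main thing to watch is just that $\hat m(y)$ is a deterministic function of $y$, so the indicator $\mathbf 1\{\hat m(y) = m\}$ selects exactly one summand when we sum over $m$.
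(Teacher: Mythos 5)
Your proof is correct and is exactly the elementary argument the paper has in mind: the paper omits a proof altogether, describing the proposition as one ``whose proof is easy,'' and your unwinding of the 0--1 loss expectation, the factorization $f(m,y)=f(m\mid y)f(y)$, and the collapse of $\sum_m \mathbf 1\{\widehat{m}(y)=m\}f(m\mid y)$ to the single term $f(\widehat{m}(y)\mid y)$ is the standard route. Nothing is missing.
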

The last result \eqref{eq:raw.disintegrate2} suggests that a
conditional expected value of the misclassification loss is a valuable
indicator of the error at $y$ since it is admitted that the posterior
probability of the predicted model reveals the accuracy of the
decision at $y$.  Nevertheless, the whole simulated datasets are not
saved into the ABC reference table but solely some numerical summaries
$S(y)$ per simulated dataset $y$, as explained above. Thus the
disintegration process of $\tau$ is practically limited to the
conditional expectation of the loss knowing some non one-to-one
function of $y$. Its definition becomes thereforemuch more subtle than the
basic \eqref{eq:raw.disintegrate}.
Actually, the ABC classifier can be trained on a subset $S_1(y)$ of
the summaries $S(y)$ saved in the training reference table, or on some
deterministic function (we still write $S_1(y)$) of $S(y)$ that 
reduces the dimension, such as the projection on the LDA axes proposed
by \citet{estoup:etal:2012}.  To highlight this fact, the ABC
classifier is denoted by $\widehat{m}(S_1(y))$ in what follows.  It is
worth noting here that the above setting encompasses any dimension
reduction technique presented in the review of \citet{blum2013},
though the review in oriented on parameter inference.  Furthermore
we might want to disintegrate the misclassification rate with respect
to another projection $S_2(y)$ of the simulated data that can
or cannot be related to the summaries $S_1(y)$ used to train the
ABC classifier, albeit $S_2(y)$ is also limited to be a
deterministic function of $S(y)$. This yields the following.
\begin{definition}\label{def:local}
  The \textit{local error rate} of the $\widehat{m}(S_1(y))$ classifier with
  respect to $S_2(y)$ is 
  \[
  \tau_{S_1}(S_2(y)):=\prob\big(\widehat{m}(S_1(Y)) \neq\mathscr M\,\big|\, S_2(Y)=S_2(y)\big),
  \]
  where $(\mathscr{M}, Y)$ is a random variable with distribution
  given in \eqref{eq:prior.predictive}.
\end{definition}

The purpose of the local misclassification rate in the present paper
is twofold and requires to play with the distinction between $S_1$ and
$S_2$, as the last part will show on numerical examples. The first
goal is the construction of a prospective tool that aims at checking
whether a new statistic $S'(y)$ carries additional information
regarding the model choice, beyond a first set of statistics
$S_1(y)$. In the latter case, it can be useful to localize the
misclassification error of $\widehat{m}(S_1(y))$ with respect to the
concatenated vector $S_2(y)=(S_1(y), S'(y))$.  Indeed, this local
error rate can reveal concentrated areas of the data space,
characterized in terms of $S_2(y)$, in which the local error rate
rises above $(M-1)/M$, the averaged (local) amount of errors of the
random classifier among $M$ models, so as to approach $1$. The
interpretation of the phenomenon is as follows: errors committed by
$\widehat{m}(S_1(y))$, that are mostly spread on the $S_1(y)$-space,
might gather in particular areas of subspaces of the support of
$S_2(y)=(S_1(y), S'(y))$.
This peculiarity is due to
the dimension reduction of the summary statistics in ABC before the
training of the classifier and represents a concrete proof of the
difficulty of ABC model choice already raised by \citet{didelot2011}
and \citet{robert11}.

The second goal of the local error rate given in
Definition~\ref{def:local} is the evaluation of the confidence we may
concede in the model predicted at $\yobs$ by $\widehat{m}(S_1(y))$, in
which case we set $S_2(y)=S_1(y)$.
And, when both sets of summaries agree, the
results of Proposition~\ref{pro:disintegrate} extend to
\begin{align} 
\tau_{S_1}(S_1(y)) &= \sum_m \pi(m|S_1(y))\mathbf
1\{\widehat{m}(S_1(y))=m\} \notag
\\
&= 1 - \pi(\widehat{m}(S_1(y)) \, |\, S_1(y)).
\label{eq:local.error.agree}
\end{align}

Besides the local error rate we propose in Definition~\ref{def:local}
is an upper bound of the Bayes classifier
if we admit the loss of information committed by replacing $y$ with the summaries.
\begin{proposition} \label{pro:compare.knn.Bayes}
  Consider any classifier $\widehat{m}(S_1(y))$. The local error rate
  of this classifier satisfies
  \begin{align}
    \tau_{S_1}(s_2) &= \prob\left(\widehat{m}(S_1(Y)) \neq \mathscr
      M \,\middle|\, S_2(Y)=s_2 \right) \notag
    \\
    &\ge  
    \prob\left(\widehat{m}_\text{\rm MAP}(Y) \neq \mathscr
      M \,\middle|\, S_2(Y)=s_2 \right),\label{eq:bound}
  \end{align}
  where $\widehat{m}_\text{\rm MAP}$ is the Bayes classifier defined in
  \eqref{eq:bayes.classifier} and $s_2$ any value in the support of $S_2(Y)$. 
  Consequently,
  \begin{equation}
    \label{eq:1}
    \prob\left(\widehat{m}(S_1(Y)) \neq \mathscr
      M \right) \ge \prob\left(\widehat{m}_\text{\rm MAP}(Y) \neq \mathscr
      M \right).
  \end{equation}
\end{proposition}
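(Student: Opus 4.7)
The plan is to reduce the proposition to the standard optimality property of the Bayes classifier and then apply the tower property of conditional expectations. The key observation is that $\widehat{m}(S_1(y))$ is itself a (measurable) function of $y$, say $g(y) := \widehat{m}(S_1(y))$, so it suffices to prove the inequality for an arbitrary classifier $g: y \mapsto g(y)$ taking values in $\{1,\ldots,M\}$.

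First, I would work pointwise in $y$. By the same argument underlying \eqref{eq:raw.disintegrate2}, for any classifier $g(Y)$ one has $\prob(g(Y)\neq\mathscr M \mid Y=y) = 1 - f(g(y)\mid y)$, whereas the MAP classifier attains the minimum $1 - \max_m f(m\mid y)$. Consequently,
\[
\prob(g(Y)\neq\mathscr M \mid Y=y) \ge \prob(\widehat{m}_{\text{MAP}}(Y)\neq\mathscr M \mid Y=y)
\]
for every $y$ in the support of $Y$.

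Next, I would use the tower property to condition further on $S_2(Y)=s_2$. Since $S_2$ is a deterministic function of $Y$, the $\sigma$-algebra generated by $S_2(Y)$ is contained in that of $Y$, and taking conditional expectations of both sides of the above pointwise inequality preserves the order:
\begin{align*}
\prob(g(Y)\neq\mathscr M \mid S_2(Y)=s_2) &= \mathbb E\!\left[\prob(g(Y)\neq\mathscr M \mid Y) \,\middle|\, S_2(Y)=s_2\right] \\
&\ge \mathbb E\!\left[\prob(\widehat{m}_{\text{MAP}}(Y)\neq\mathscr M \mid Y) \,\middle|\, S_2(Y)=s_2\right] \\
&= \prob(\widehat{m}_{\text{MAP}}(Y)\neq\mathscr M \mid S_2(Y)=s_2).
\end{align*}
Specialising to $g = \widehat m \circ S_1$ yields \eqref{eq:bound}. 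The global inequality \eqref{eq:1} then follows by integrating \eqref{eq:bound} against the marginal distribution of $S_2(Y)$.

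There is no real obstacle here: the statement is essentially a restatement of the fact that $\widehat m_{\text{MAP}}$ minimises the 0–1 loss, combined with the elementary monotonicity of conditional expectation. The only point requiring a little care is to notice that the conditioning event $\{S_2(Y)=s_2\}$ is coarser than $\{Y=y\}$, so that the pointwise optimality of the MAP transfers to the local rate for \emph{any} choice of $S_2$, whether or not $S_2$ is related to the training summary $S_1$.
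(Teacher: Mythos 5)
Your proof is correct and follows essentially the same route as the paper's: establish the pointwise optimality of $\widehat{m}_{\text{MAP}}$ via \eqref{eq:raw.disintegrate2}, then integrate the resulting inequality with respect to the conditional distribution of $Y$ given $S_2(Y)=s_2$ (your tower-property step), and finally integrate once more for \eqref{eq:1}. You merely spell out the intermediate identities more explicitly than the paper does.
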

\begin{proof}
  Proposition~\ref{pro:disintegrate}, in particular
  \eqref{eq:raw.disintegrate2}, implies that
  $\widehat{m}_\text{MAP}(y)$ is the ideal classifier that minimizes the
  conditional $0$-$1$ loss knowing $y$. Hence, we have
  \begin{multline*}
   \prob\left(\widehat{m}(S_1(Y)) \neq \mathscr
      M \,\middle|\, Y=y \right) 
    \\
    \ge  
    \prob\left(\widehat{m}_\text{\rm MAP}(Y) \neq \mathscr
      M \,\middle|\, Y=y \right).
  \end{multline*}  
  Integrating the above with respect to the distribution of $Y$
  knowing $S_2(Y)$ leads to \eqref{eq:bound}, and a last integral to
  \eqref{eq:1}.
\end{proof}

Proposition~\ref{pro:compare.knn.Bayes} shows that the introduction of
new summary statistics cannot distort the model selection insofar as
the risk of the resulting classifier cannot decrease below the risk of
the Bayes classifier $\widehat{m}_\text{MAP}$. We give here a last
flavor of the results of \citet{marin14} and mention that, if
$S_1(y)=S_2(y)=S(y)$ and if the classifiers are perfect
(\textit{i.e.}, trained on infinite reference tables), we can rephrase
part of their results as providing mild conditions on $S$ under which
the local error $\tau_S(S(y))$ tends to $0$ when the size of the
dataset $y$ tends to infinity.

\subsection{Estimation algorithm of the local error rates}
\label{sub:estimate.local.error}

The numerical estimation of the local error rate $\tau_{S_1}(S_2(y))$,
as a surface depending on $S_2(y)$, is therefore paramount to assess
the difficulty of the classification problem at any $s_2=S_S(y)$, and
the local accuracy of the classifier. Naturally, when $S_1(y)=S_2(y)$
for all $y$, the local error can be evaluated at $S_2(\yobs)$ by
plugging in \eqref{eq:local.error.agree} the ABC estimates of the posterior probabilities (the
relative frequencies of each model among the particles returned by
Algorithm~\ref{algo:ABC})  as
substitute for $\pi(m|S(\yobs))$. This estimation procedure is
restricted to the above mentioned case where the set of statistics used
to localize the error rate agrees with the set of statistics used to
train the classifier. Moreover, the approximation of the posterior
probabilities returned by Algorithm~\ref{algo:ABC}, \textit{i.e.}, a
knn method, might not be trustworthy: the calibration of $k$ performed
by minimizing the prior error rate $\tau$ does not provide any
certainty on the estimated posterior probabilities beyond a ranking of
these probabilities that yields the best classifier in terms of
misclassification. In other words, the knn method calibrated to answer
the classification problem of discriminating among models does not
produce a reliable answer to the regression problem of estimating
posterior probabilities. Certainly, the value of $k$ must be increased
to face this second kind of issue, at the price of a larger bias that
might even swap the model ranking (otherwise, the empirical prior
error rate would not depend on $k$, see the numerical result section).

For all these reasons, we propose here an alternative estimate of the
local error. The core idea of our proposal is the recourse to a
nonparametric method to estimate conditional expected values based on
the calls to the classifier $\widehat{m}$ on a validation reference
table, already simulated to estimate the global error rate
$\tau$. Nadaraya-Watson kernel estimators of the conditional
expected values 
\begin{multline}
\tau_{S_1}(S_2(y))=  \\
\mathbb{E}\left(\mathbf 1 \{\widehat{m}(S_1(Y)) \neq
\mathscr M\}\, \middle|\, S_2(Y)=S_2(y) \right) \label{eq:cond.exp}
\end{multline}
rely explicitly on the regularity of this indicator, as a function of
$s_2=S_2(y)$, which contrasts with the ABC plug-in estimate described
above. We thus hope improvements in the accuracy of error estimate
and a more reliable approximation of the whole function
$\tau_{S_1}(S_2(y))$. Additionally, we are not limited anymore to
the special case where $S_1(y)=S_2(y)$ for all $y$.  It is worth
stressing here that the bandwidth of the kernels must be calibrated by
minimizing the $L^2$-loss, since the target is a conditional expected
value.

\begin{algorithm2e}
\caption{Estimation of $\tau_{S_1}(S_2(y))$ given an classifier $\widehat{m}(S_1(y))$
on a validation or test reference table}
\label{algo:estim.local}
\KwIn{A validation or test reference table and a classifier $\widehat{m}(S_1(y))$ fitted with a first reference table}
\KwOut{Estimations of \eqref{eq:cond.exp} at each point of the second
reference table}

\medskip
\For{\rm\textbf{each} $(m_j,y_j)$ in the test table}{
  \textbf{compute} $\delta_j=\{\widehat{m}(S_1(y_j)) \neq m_j\}$\;
}

\textbf{calibrate} the bandwidth $\mathbf{h}$ of the Nadaraya-Watson estimator
predicting $\delta_j$ knowing $S_2(y_j)$ via cross-validation on the
test table\;

\For{\rm\textbf{each} $(m_j,y_j)$ in the test table}{
  \textbf{evaluate} the Nadaraya-Watson estimator with bandwidth
  $\mathbf{h}$ at $S_2(y_j)$\;
}
\end{algorithm2e}


Practically, this leads to Algorithm~\ref{algo:estim.local} which
requires a \textit{validation or test reference table} independent of
the training database that constitutes the ABC reference table. We can
bypass the requirement by resorting to cross validation methods, as
for the computation of the global prior misclassification rate
$\tau$. But the ensued algorithm is complex and it induces more calls
to the classifier (consider, e.g,. a ten-fold cross validation
algorithm computed on more than one random grouping of the reference
table) than the basic Algorithm~\ref{algo:estim.local}, whereas the
training database can always be supplemented by a validation database
since ABC, by its very nature, is a learning problem on simulated
databases.  Moreover, to display the whole surface
$\tau_{S_1}(S_2(y))$, we can interpolate values of the local error
between points $S_2(y)$ of the second reference table with the help of
a Kriging algorithm. We performed numerical experiments (not detailed
here) concluding that the resort to a Kriging algorithm provides
results comparable to the evaluation of Nadaraya-Watson estimator at
any point of the support of $S_2(y)$, and can reduce computation
times.


\subsection{Adaptive ABC}
\label{sub:adaptive}
The local error rate can also represent a valuable way to adjust the summary statistics to the data point $y$ and to build an
adaptive ABC algorithm achieving a local trade off that increases the
dimension of the summary statistics at $y$ only when the additional
coordinates add information regarding the classification problem.
Assume that we have at our disposal a collection of ABC classifiers,
$\widehat{m}_\lambda(y):=\widehat{m}_\lambda(S_\lambda(y))$,
$\lambda=1,\ldots,\Lambda$, trained on various projections of $y$, namely the
$S_\lambda(y)$'s, and that all these vectors, sorted with respect to
their dimension, depend only on the summary statistics 
registered in the reference tables. Sometimes low dimensional
statistics may suffice for the classification (of models) at $y$,
whereas other times we may need to examine statistics of larger
dimension.  The local adaptation of the classifier is
accomplished through the disintegration of the misclassification rates
of the initial classifiers with respect to a common statistic
$S_0(y)$. Denoting $\tau_{\lambda}(S_0(y))$ the local error rate of
$\widehat{m}_\lambda(y)$ knowing $S_0(y)$, this reasoning yields the
adaptive classifier defined by
\begin{multline} \label{eq:adaptive}
\widetilde{m}(S(y)) := \widehat{m}_{\,\widehat{\lambda}(y)}(y),
\\ \text{where }
\widehat{\lambda}(y) := \operatorname{arg\,min}_{\lambda=1,\ldots, \Lambda}
\tau_{\lambda}(S_0(y)). 
\end{multline}
This last classifier attempts to avoid bearing the cost of the
potential curse of dimensionality from which all knn classifiers
suffer and can help reduce the error of the initial classifiers,
although the error of the ideal classifier \eqref{eq:bayes.classifier}
remains an absolute lower bound, see
Proposition~\ref{pro:compare.knn.Bayes}. From a different perspective,
\eqref{eq:adaptive} represents a way to tune the similarity
$\rho(S(y),S(\yobs))$ of Algorithm~\ref{algo:ABC} that locally
includes or excludes components of $S(y)$ to assess the proximity
between $S(y)$ and $S(\yobs)$.  Practically, we rely on the following
algorithm to produce the adaptive classifier, that requires a
validation reference table independent of the reference table used to
fit the initial classifiers.

\begin{algorithm2e}
\caption{Adaptive ABC model choice}
\label{algo:adaptive.ABC}
\KwIn{A collection of classifiers $\widehat{m}_\lambda(y)$,
  $\lambda=1,\ldots,\Lambda$ and a validation  reference table}
\KwOut{An adaptive classifier $\widetilde{m}(y)$}

\medskip

\For{\rm\textbf{each} $\lambda\in\{1,\ldots,\Lambda\}$}{
  \textbf{estimate} the local error of $\widehat{m}_\lambda(y)$
  knowing $S_0(y)$ with the help of Algorithm~\ref{algo:estim.local}\;
}

\textbf{return} the adaptive classifier $\widetilde{m}$ as a function
computing \eqref{eq:adaptive}\;
\end{algorithm2e}


The local error surface estimated within the loop of
Algorithm~\ref{algo:adaptive.ABC} must contrast the errors of the
collection of classifiers. Our advice is thus to build a projection
$S_0(y)$ of the summaries $S(y)$ registered in the reference tables as
follow. Add to the validation reference table a qualitative trait
which groups the replicates of the table according to their
differences between the predicted numbers by the initial classifiers and
the model numbers $m_j$ registered in the database. For instance,
when the collection is composed of $\Lambda=2$ classifiers, the
qualitative trait takes three values: value $0$ when both classifiers
$\widehat{m}_\lambda(y_j)$ agree (whatever the value of
$\widehat{m}_j$),  value $1$ when the first classifier only returns the correct
number, \textit{i.e.}, $\widehat{m}_1(y_j)=m_j\neq
\widehat{m}_2(y_j)$, and value $2$ when the second classifier only
returns the correct number, \textit{i.e.}, $\widehat{m}_1(y_j)\neq m_j =
\widehat{m}_2(y_j)$. The axes of the linear discriminant analysis
(LDA) predicting the qualitative trait knowing $S(y)$ provide
a projection $S_0(y)$ which contrasts the errors of the initial collection
of classifiers.

Finally it is important to note that the local error rates are
evaluated in Algorithm~\ref{algo:adaptive.ABC} with the help of a
validation reference table. Therefore, a reliable estimation of the
accuracy of the adaptive classifier cannot be based on the same
validation database because of the optimism bias of the training
error.  Evaluating the accuracy requires the simulation of a
\textit{test reference table} independently of the two first databases
used to train and adapt the predictor, as is usually performed in the
machine learning community.


\section{Hidden random fields}
\label{sec:HPM}

Our primary intent with the ABC methodology exposed in
Section~\ref{sec:ABC} was the study of new summary statistics to
discriminate between hidden random fields models. The following
materials numerically illustrate how ABC can choose the
dependency structure of latent Potts models among two possible
neighborhood systems, both described with undirected graphs, whilst
highlighting the generality of the approach.

\subsection{Hidden Potts model}
\label{sub:Potts}
This numerical part of the paper focuses on hidden Potts models, that
are representative of the general level of difficulty while at the
same time being widely used in practice \citep[see for
example][]{hurn03,alfo08,francois06,moores14}.  we recall that the
latent random field $x$ is a family of random variables $x_i$ indexed
by a finite set $\s$, whose elements are called sites, and taking
values in a finite state space $\X:=\{0,\ldots,K-1\}$, interpreted as
colors. When modeling a digital image, the sites are lying on a
regular 2D-grid of pixels, and their dependency is given by an
undirected graph $\G$ which defines an adjacency relationship on the
set of sites $\s$: by definition, both sites $i$ and $j$ are adjacent
if and only if the graph $\G$ includes an edge that links directly $i$
and $j$. A Potts model sets a probability distribution on $x$,
parametrized by a scalar $\beta$ that adjusts the level of dependency
between adjacent sites. The latter class of models differs from the
auto-models of \citet{besag74}, that allow variations on the level of
dependencies between edges and introduce potential anisotropy on the
graph. But the difficulty of all these models arises from the
intractable normalizing constant, called the partition function, as
illustrated in the distribution of Potts models defined by
\[
\pi(x\vert\G, \beta)=\frac{1}{\Z}\exp\left(\beta\sum_{\ivj}\indxixj\right).
\]
The above sum $\ivj$ ranges the set of edges of the graph $\G$ and the normalizing constant $\Z$ 
writes as 
\begin{equation}
\Z=\sum_{x\in\X}
\exp\left(\beta\sum_{\ivj}\indxixj\right),
\label{eq-Z}
\end{equation}
namely a summation over the numerous possible realizations of the
random field $x$, that cannot be computed directly (except for small
grids and small number of colors $K$). In the statistical physic
literature, $\beta$ is interpreted as the inverse of a temperature,
and when the temperature drops below a fixed threshold, values $x_i$
of a typical realization of the field are almost all equal (due to
important dependency between all sites). These peculiarities of Potts
models are called phase transitions.

\begin{figure}[tb]
\centering
\begin{minipage}[t]{7cm}
\centering
\includegraphics [height = 3cm, width = 3cm]{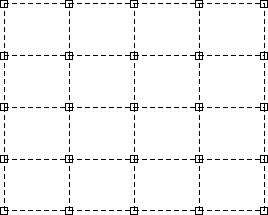}\\
(a)
\end{minipage}
\begin{minipage}[t]{7cm}
\centering
\includegraphics [height = 3cm, width = 3cm]{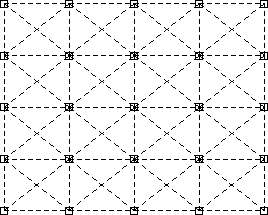}\\
(b)
\end{minipage}
\caption{Neighborhood graphs $\G$ of hidden Potts model. (a) The four closest neighbour graph $\G_4$ defining model $\HPMf$. (b) The eight closest neighbour graph $\G_8$ defining model $\HPMh$}
\label{fig:neigh}
\end{figure}

In hidden Markov random fields, the latent process is observed indirectly
through another field; this permits the modeling of a noise that may be
encountered in many concrete situations. Precisely, given the
realization $x$ of the latent field, the observation $y$ is a family
of random variables indexed by the set of sites, and taking values in
a set $\Y$, \textit{i.e.}, $y=\left(y_i; {i\in\s}\right)$, and are
commonly assumed as independent draws that form a noisy version of the
hidden fields. Consequently, we set the conditional distribution of
$y$ knowing $x$ as the product $\pi(y\vert x, \alpha)=\prod_{i\in\s}
P(y_i\vert x_i,\alpha)$, where $P$ is the marginal noise distribution
parametrized by some scalar $\alpha$. Hence the likelihood of the
hidden Potts model with parameter $\beta$ on the graph $\G$  and noise
distribution $P_\alpha$, denoted $\HPM$, is given by 
\[
f(y\vert\alpha ,\beta, \G) =\sum_{x\in\mathscr{X}}\pi\left(x\vert \G ,\beta\right)\pi_\alpha(y\vert x)
\]
and faces a double intractable issue as neither the likelihood of the
latent field, nor the above sum can be computed directly: the
cardinality of the range of the sum is of combinatorial
complexity. The following numerical experiments are based on two
classes of noises, producing either observations in $\{0,1,\ldots,
K-1\}$, the set of latent colors, or continuous observations that take
values in $\mathbb R$.

The common point of our examples is to select the hidden Gibbs model
that better fits a given $\yobs$ composed of $N=100\times 100$ pixels
within different neighborhood systems represented as undirected graphs
$\G$. We considered the two widely used adjacency structures in our
simulations, namely the graph $\G_4$ (respectively $\G_8$) in which
the neighborhood of a site is composed of the four (respectively
eight) closest sites on the two-dimensional lattice, except on the
boundaries of the lattice, see Fig.~\ref{fig:neigh}.  The prior
probabilities of both models were set to $1/2$ in all experiments.
The Bayesian analysis of the model choice question adds another
integral beyond the two above mentioned sums that cannot be calculated
explicitly or numerically either and the problem we illustrate are
said triple intractable. Up to our knowledge the choice of the latent
neighborhood structure has never been seriously tackled in the
Bayesian literature. We mentioned here the mean field approximation of
\citet{forbes03} whose software can estimate paramaters of such
models, and compare models fitness via a BIC criterion. But the first
results we try to obtain with this tool were worse than with ABC,
except for very low values of $\beta$. This means that either we did
not manage to run the software properly, or that the mean field
approximation is not appropriate to discriminate between neighborhood
structures.  The detailed settings of our three experiments are as
follows.

\paragraph{First experiment.} We considered Potts models with $K=2$
colors and a noise process that switches each pixel independently with
probability \[\exp(-\alpha)/(\exp(\alpha)+\exp(-\alpha)),\] following
the proposal of \citet{everitt12}. The prior on $\alpha$ was
uniform over $(0.42; 2.3)$, where the bounds of the interval were
determined to switch a pixel with a probability less than
$30\%$. Regarding the dependency parameter $\beta$, we set prior
distributions below the phase transition which occurs at different
levels depending on the neighborhood structure. Precisely we used  a uniform
distribution over $(0;1)$ when the adjacency is given by $\G_4$ and a
uniform distribution over $(0; 0.35)$ with $\G_8$.

\paragraph{Second experiment.} We increased the number of colors in
the Potts models and set $K=16$. Likewise, we set a noise
that changes the color of each pixel with a given probability
parametrized by $\alpha$, and conditionally
on a change at site $i$, we rely on the least favorable distribution,
which is a uniform draw within all colors except the latent one. To
extend the parametrization of \citet{everitt12}, the marginal 
distribution of the noise is defined by
\[
P_\alpha(y_i\vert x_i) = \frac{\exp \Big\{\alpha\big(2\indxyi -
  1\big)\Big\} }{\exp(\alpha) + (K-1)\exp(-\alpha)}
\]
and a uniform prior on $\alpha$ over the interval $(1.78;4.8)$ ensures
that the probability of changing a pixel with the noise process is at
most $30\%$. The uniform prior on the Potts parameter $\beta$ was also
tuned to stay below the phase transition. Hence $\beta$
ranges the interval $(0;2.4)$ with a $\G_4$ structure and
the interval $(0; 1)$ with a $\G_8$ structure.

\paragraph{Third experiment.} We introduced a homoscedastic Gaussian
noise whose marginal distribution is characterized by
\[
y_i~\vert ~ x_i =c \sim \mathcal{N}(c,\sigma^2) \quad c\in \{0;1\}
\]
over bicolor Potts models. And both prior distributions on parameter
$\beta$ are similar to the ones on the latent fields of the first
experiment.  The standard deviation $\sigma=0.39$ was set so that the
probability of a wrong prediction of the latent color with a marginal
MAP rule on the Gaussian model is about $15\%$.

\subsection{Geometric summary statistics}
\label{sub:geometry}

\begin{figure*}[t]
\centering
\begin{minipage}[t]{.45\textwidth}
\centering
\includegraphics[height=11em, width=11em]{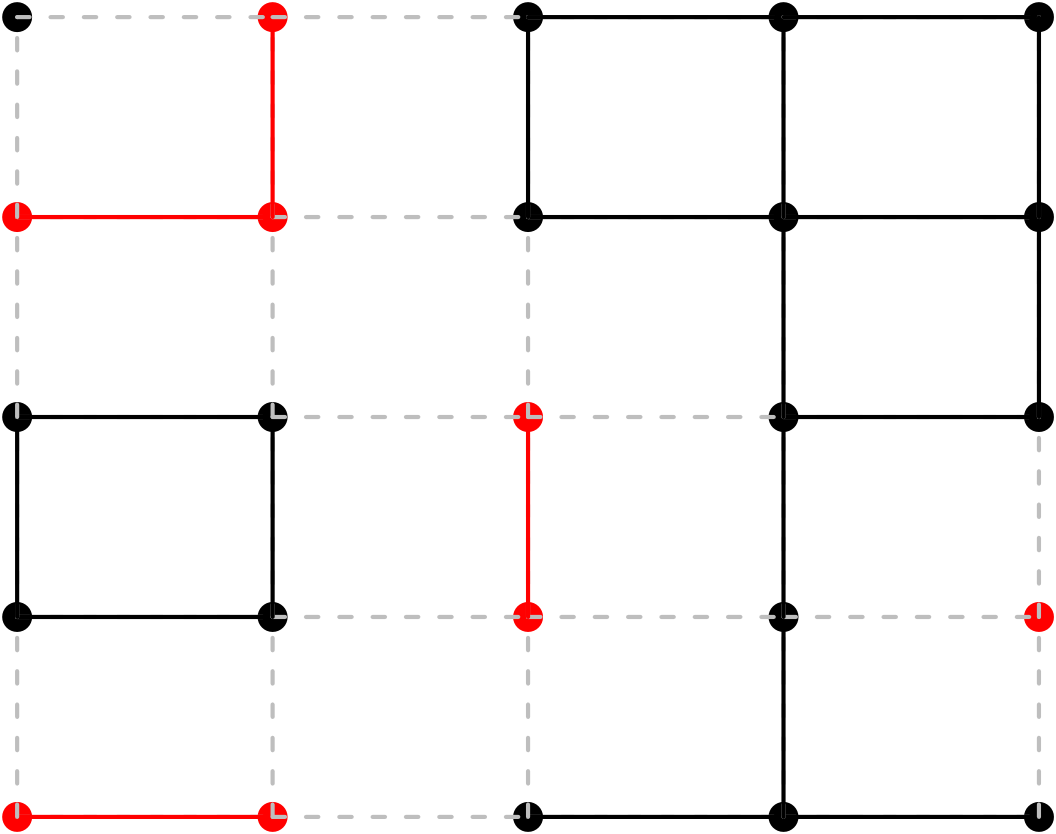}\\
$\Gamma(\G_4,y)$
\end{minipage} 
\begin{minipage}[t]{.45\textwidth}
\centering
\includegraphics[height=11em, width=11em]{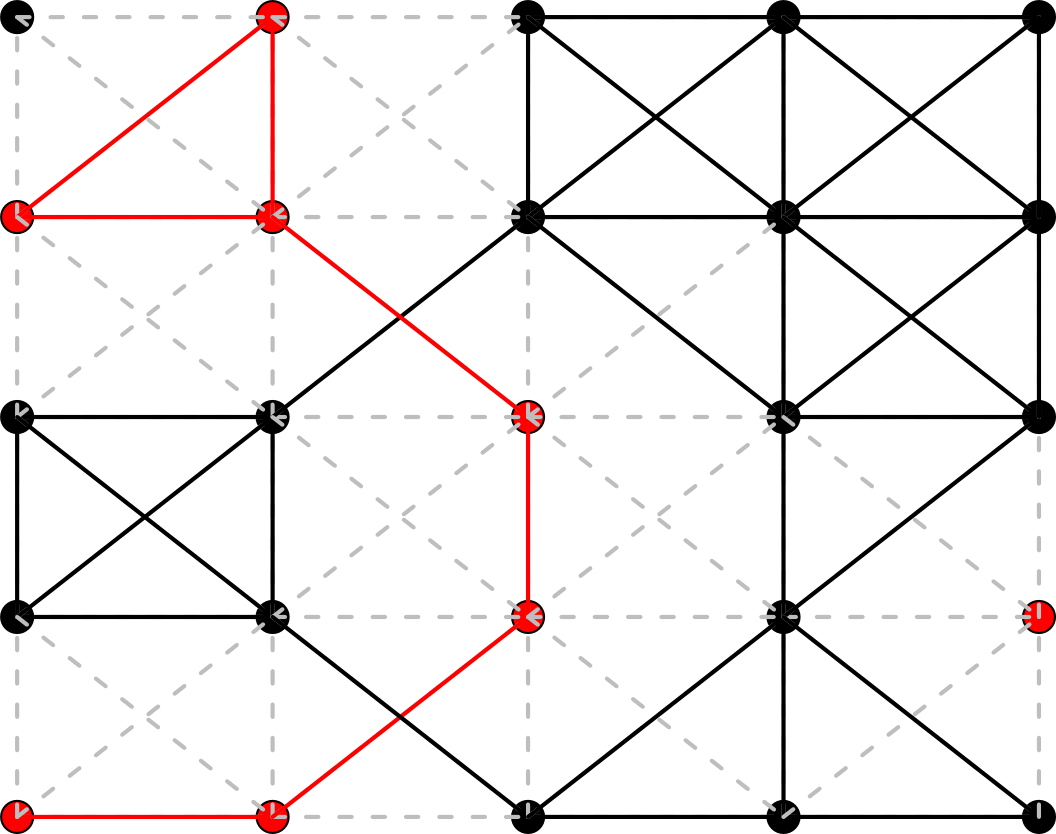}\\
$\Gamma(\G_8,y)$
\end{minipage}
\caption{The induced graph $\Gamma(\G_4,y)$ and $\Gamma(\G_8,y)$ on a given bicolor image $y$ of size $5\times 5$. The six summary statistics on $y$ are thus $R(\G_4,y) = 22$, $T(\G_4,y) = 7$, $U(\G_4,y)=12$, $R(\G_8,y) = 39$, $T(\G_8,y) = 4$ and $U(\G_8,y)=16$}
\label{fig:statgeom}
\end{figure*}

Performing a Bayesian model choice via ABC algorithms
requires summary statistics that capture the relevant information from
the observation $\yobs$ to discriminate among the competing
models. When the observation is noise-free, \citet{grelaud09} noted
that the joint distribution resulting from the Bayesian modeling
falls into the exponential family, and they obtained
consecutively a small set of summary statistics, depending on the
collection of considered models, that were sufficient. In front of
noise, the situation differs substantially as the joint distribution
lies now outside the exponential family, and the above mentioned
statistics are not sufficient anymore, whence the urge to bring forward
other concrete and workable statistics. 
The general approach we developed reveals geometric features of a
discrete field $y$ via the recourse to colored graphs attached to $y$
and their connected components. Consider an undirected graph $\G$
whose set of vertices coincides with $\s$, the set of sites of $y$.
\begin{definition}
  \label{def:induced.graph} 
  The graph induced by $\G$ on the field $y$, denoted $\GGy$, is the
  undirected graph whose set of edges gathers the edges of $\G$
  between sites of $y$ that share the same color, \textit{i.e.},
  \[
  i \stackrel{\Gamma(\G, y)}{\sim} j \quad \iff \quad
  i \stackrel{\G}{\sim} j \text{ and } y_i=y_j.
  \]
\end{definition}
We believe that the connected components of such induced graphs
capture major parts of the geometry of $y$.  Recall that a connected
component of an undirected graph $\Gamma$ is a subgraph of $\Gamma$ in
which any two vertices are connected to each other by a path, and
which is connected to no other vertices of $\Gamma$. And the connected
components form a partition of the vertices. Since ABC relies on the
computation of the summary statistics on many simulated datasets, it
is also worth noting that the connected components can be computed
efficiently with the help of famous graph algorithms in linear time
based on a breadth-first search or depth-first search over the
graph. The empirical distribution of the sizes of the connected
components represents an important source of geometric informations,
but cannot be used as a statistic in ABC because of the curse of
dimensionality. The definition of a low dimensional summary statistic
derived from these connect components should be guided by the
intuition on the model choice we face.

Our numerical experiments discriminate between a $\G_4$- and a
$\G_8$-neighborhood structure and we considered two induced graphs on
each simulated $y$, namely $\Gamma(\G_4, y)$ and $\Gamma(\G_8,
y)$. Remark that the two-dimensional statistics proposed by
\citet{grelaud09}, which are sufficient in the noise-free context, are
the total numbers of edges in both induced graphs. After very few
trials without success, we fixed ourselves on four additional summary
statistics, namely the size of the largest component of each induced
graph, as well as the total number of connect components in each
graph. See Fig.~\ref{fig:statgeom} for an example on a bicolor picture
$y$. To fix the notations, for any induced graph $\Gamma(\G, y)$, we
define
\begin{itemize}
\item $R(\G,y)$ as the total number of edges in $\Gamma(\G, y)$,
\item $T(\G,y)$ as the number of connected components in $\Gamma(\G,
  y)$ and
\item $U(\G,y)$ as the size of the largest connected component of $\Gamma(\G, y)$.
\end{itemize}
And to sum up the above, the set of summary statistics that where
registered in the reference tables for each simulated field $y$ is
\begin{multline*}
S(y) = \Big(R(\G_4,y); R(\G_8,y); T(\G_4,y);
\\
 T(\G_8,y); U(\G_4,y); U(\G_8,y)\Big)
\end{multline*}
in the first and second experiments.

In the third experiment, the observed field $y$ takes values in
$\mathbb R$ and we cannot apply directly the approach based on induced
graphes because no two pixels share the same color. All of the above
statistics are meaningless, including the statistics $R(\G,y)$ used by
\citet{grelaud09} in the noise-free case. We rely on a
quantization preprocessing performed via a kmeans algorithm on
the observed colors that forgets the spatial structure of the
field. The algorithm was tuned to uncover the same number of groups of
colors as the number of latent colors, namely $K=2$. If $q_2(y)$
denotes the resulting field,  the set of summary statistics becomes

\begin{multline*}
  S(y) = \Big(R\big(\G_4,q_2(y)\big); R\big(\G_8,q_2(y)\big);
  T\big(\G_4,q_2(y)\big);
  \\
  T\big(\G_8,q_2(y)\big);
  U\big(\G_4,q_2(y)\big); U\big(\G_8,q_2(y)\big)\Big).
\end{multline*}
We have assumed here that the number of latent colors is known to keep
the same purpose of selecting the correct neighborhood
structure. Indeed \citet{cucala13} have already proposed a (complex)
Bayesian method to infer the appropriate number of hidden colors. But
more generally, we can add statistics based on various quantizations
$q_k(y)$ of $y$ with $k$ groups.


\subsection{Numerical results}
\label{sub:numerical}

\begin{table}[b]
  \centering
  \caption{Evaluation of the prior error rate on a test reference
    table of size $30,000$ in the first experiment.}
  \label{tab:2colors}

  \textbf{Prior error rates}

  \begin{tabular}{ccc}
    \hline
    \textbf{Train size} & $\bf 5,000$ & $\bf 100,000$ 
    \\
    \hline
    2D statistics & $8.8 \%$  & $7.9 \%$
    \\
    4D statistics & $6.5 \%$ & $6.1 \%$
    \\
    6D statistics & $7.1 \%$ & $7.1 \%$ 
    \\
    Adaptive ABC & $6.2 \%$ & $5.5 \%$
    \\
    \hline
  \end{tabular}

\end{table}

\begin{figure*}
  \centering
    \includegraphics[width=.48\textwidth]{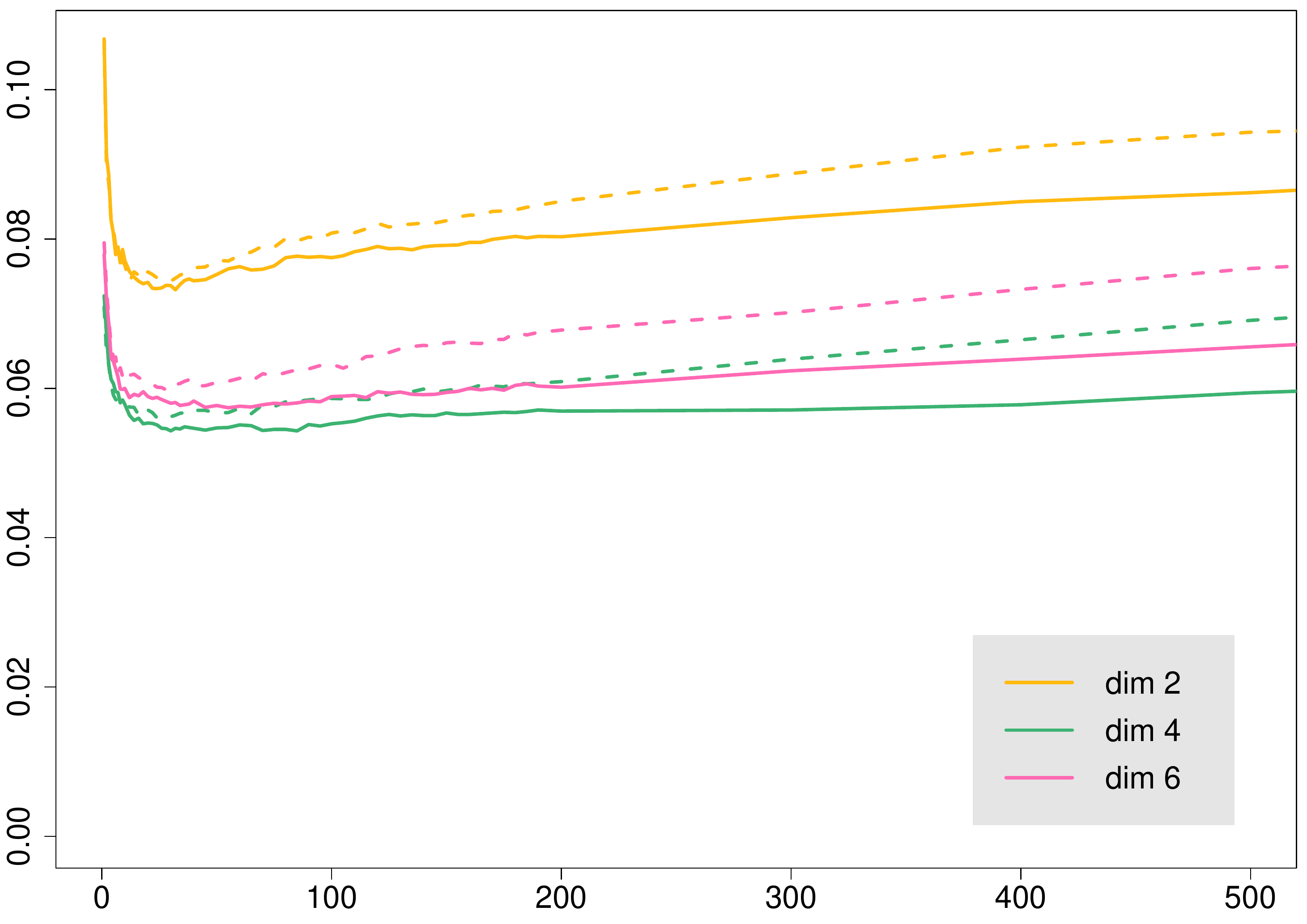} \hfill
    \includegraphics[width=.48\textwidth]{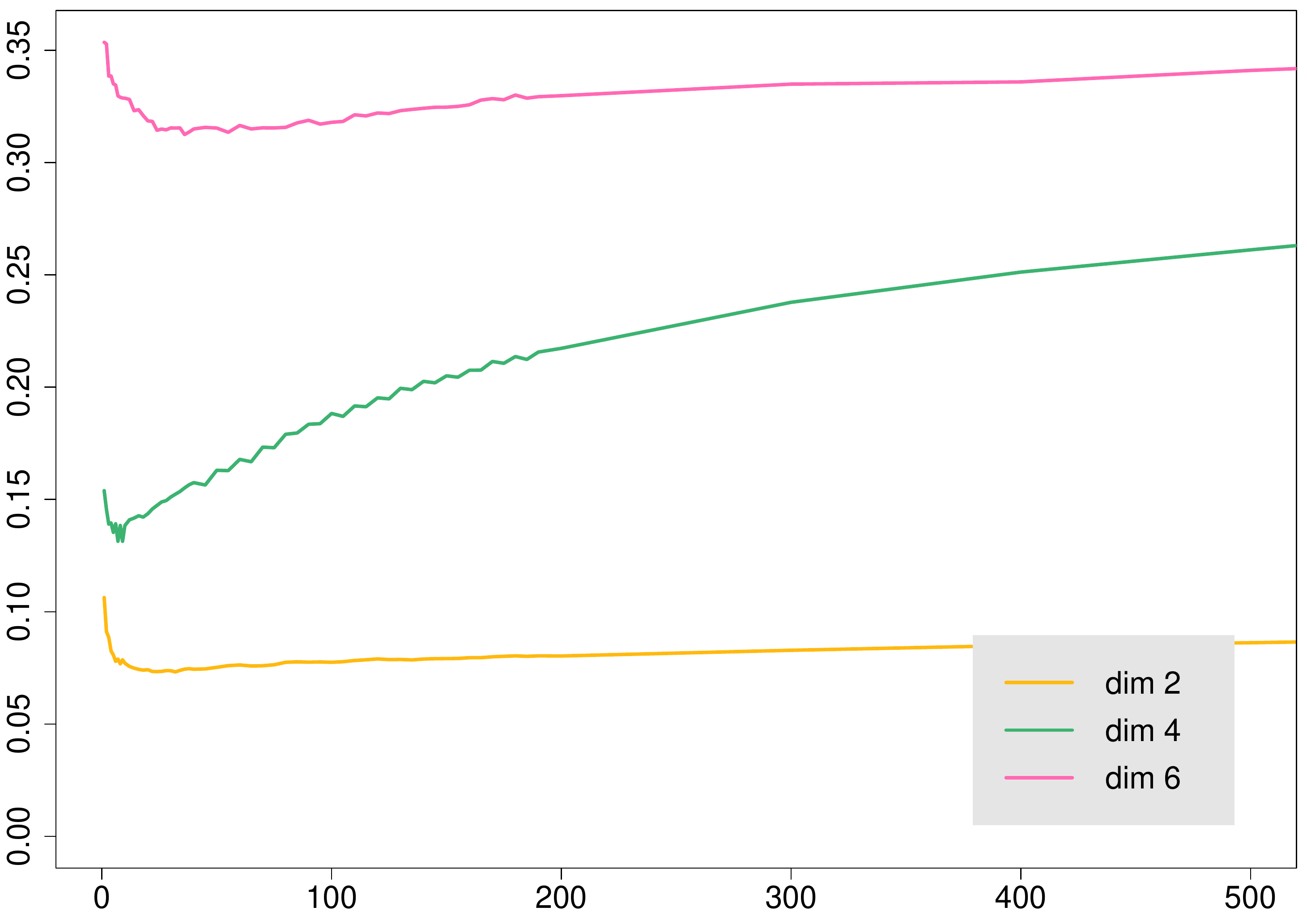}

    \hspace{.2\textwidth} (a) \hspace{.45\textwidth} (b) \hspace{.2\textwidth} ~\\

    \vspace{4mm}

    \includegraphics[width=.58\textwidth]{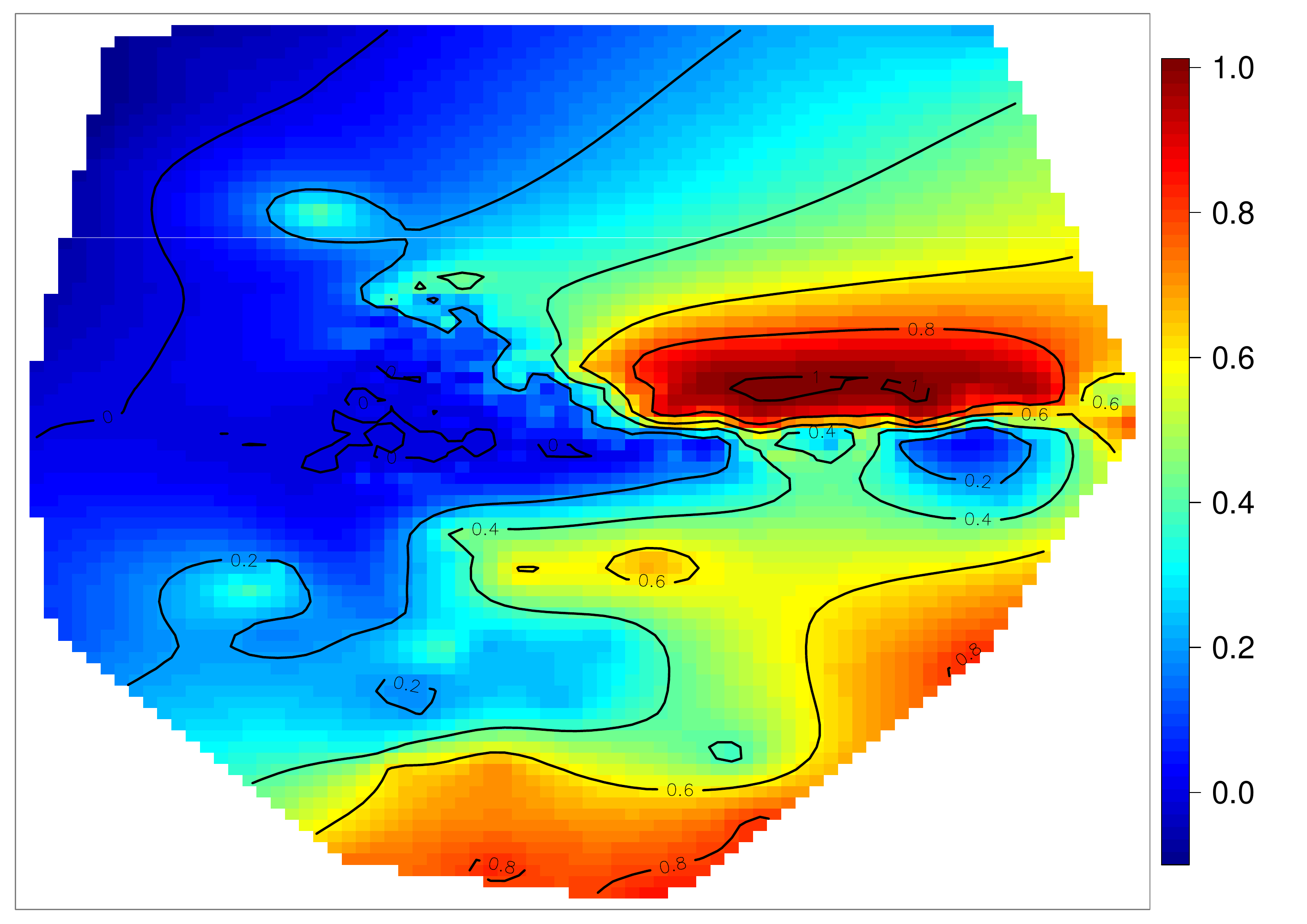}

    (c) 

   \caption{\textbf{First experiment results.} (a) Prior error rates
     (vertical axis) of ABC with respect to the number of nearest neighbors
     (horizontal axis) trained on a reference table of size $100,000$
     (solid lines) or $50,000$ (dashed lines), based on the 2D, 4D and
     6D summary statistics.  (b) Prior error rates of ABC based on the
     2D summary statistic compared with 4D and 6D summary statistics
     including additional ancillary statistics. (c) Evaluation of the
     local error on a 2D surface  }
  \label{fig:bicolor.discrete}
\end{figure*}

In all three experiments, we compare three nested sets of summary
statistics $S_{2D}(y)$, $S_{4D}(y)$ and $S_{6D}(y)$ of dimension 2, 4
and 6 respectively. They are defined as the projection onto the first
two (respectively four and six) axes of $S(y)$ described in the
previous section. We stress here that $S_{2D}(y)$, which is composed
of the summaries given by \citet{grelaud09}, are used beyond the
noise-free setting where they are sufficient for model choice. In
order to study the information carried by the connected components, we
add progressively our geometric summary statistics to the first set,
beginning by the $T(\mathscr G,y)$-type of statistics in $S_{4D}(y)$.
Finally, remark that, before evaluating the Euclidean distance in ABC
algorithms, we normalize the statistics in each reference tables with
respect to an estimation of their standard deviation since all these
summaries take values on axis of different scales. Simulated images
have been drawn thanks to the \citet{sw87} algorithm. In the least
favorable experiment, simulations of one hundred pictures (on pixel
grid of size $100\times 100$) via $20,000$ iterations of this
Markovian algorithm when parameters drawn from our prior requires
about one hour of computation on a single CPU with our optimized
\texttt{C++} code. Hence the amount of time required by ABC is
dominated by the simulations of $y$ via the Swedsen-Wang
algorithm. This motivated \citet*{moores14preproc} to propose a cut
down on the cost of running an ABC experiment by removing the
simulation of an image from hidden Potts model, and replacing it by an
approximate simulation of the summary statistics. Another alternative
is the clever sampler of  \citet{mira2001perfect} that provides exact
simulations of Ising models and can be extended to Potts models.

\paragraph{First experiment.} Fig.~\ref{fig:bicolor.discrete}(a)
illustrates the calibration of the number of nearest neighbors
(parameter $k$ of Algorithm~\ref{algo:ABC}) by showing the evolution
of the prior error rates (evaluated on a validation reference table
including $20,000$ simulations) when $k$ increases. We compared the
errors of six classifiers to inspect the differences between the three
sets of summary statistics (in yellow, green and magenta) and the
impact of the size of the training reference table ($100,000$
simulations in solid lines; $50,000$ simulations in dashed lines).
The numerical results exhibit that a good calibration of $k$ can
reduce the prior misclassification error. Thus, without really
degrading the performance of the classifiers, we can reduce the amount
of simulations required in the training reference table, whose
computation cost (in time) represents the main obstacle of ABC
methods, see also Table~\ref{tab:2colors}. Moreover, as can be guessed
from Fig.~\ref{fig:bicolor.discrete}(a), the sizes of the largest
connected components of induced graphs (included only in $S_{6D}(y)$)
do not carry additional information regarding the model choice and
Table~\ref{tab:2colors} confirms this results through evaluations of
the errors on a test reference table of $30,000$ simulations drawn
independently of both training and validation reference tables.

One can argue that the curse of dimensionality does not occur with
such low dimensional statistics and sizes of the training set, but
this intuition is wrong, as shown in
Fig.~\ref{fig:bicolor.discrete}(b).  The latter plot shows indeed the
prior misclassification rate as a function of $k$ when we replace the
last four summaries by ancillary statistics drawn independently of $m$
and $y$. We can conclude that, although the three sets of summary
statistics carry then the same information in this artificial setting,
the prior error rates increase substantially with the dimension
(classifiers are not trained on infinite reference tables!). This
conclusion shed new light on the results of
Fig.~\ref{fig:bicolor.discrete}(a): the $U(\mathscr G,y)$-type
summaries, based on the size of the largest component, are not
concretely able to help discriminate among models, but are either
highly correlated with the first four statistics; or the resolution (in
terms of size of the training reference table) does not permit the
exploitation of the possible information they add.

\begin{table}[t]
  \centering
  \caption{Evaluation of the prior error rate on a test reference
    table of size $20,000$ in the second experiment.}
  \label{tab:16colors}

  \textbf{Prior error rates}

  \begin{tabular}{ccc}
    \hline
    \textbf{Train size} & $\bf 50,000$ & $\bf 100,000$ 
    \\
    \hline
    2D statistics & $4.5\%$ & $4.4 \%$
    \\
    4D statistics & $4.6\%$ & $4.1 \%$
    \\
    6D statistics & $4.6\%$ & $4.3 \%$ \\
    \hline
  \end{tabular}
\end{table}

Fig.~\ref{fig:bicolor.discrete}(c) displays the local error rate with
respect to a projection of the image space on a plan. We have taken
here $S_1(y)=S_{2D}(y)$ in Definition~\ref{def:local}. And $S_2(y)$
ranges a plan given by a projection of the full set of
summaries that has been tuned empirically in order to gather the
errors committed by calls of $\widehat{m}(S_{2D}(y))$ on the
validation reference table. The most striking fact is that the local
error rises above $0.9$ in the oval, reddish area of
Fig.~\ref{fig:bicolor.discrete}(c). Other reddish areas of
Fig.~\ref{fig:bicolor.discrete}(c) in the bottom of the plot
correspond to parts of the space with very low probability, and may
be a dubious extrapolation of the Kriging algorithm. We can thus
conclude that the information of the new geometric summaries depends
highly on the position of $y$ in the image space and have
confidence in the interest of Algorithm~\ref{algo:adaptive.ABC}
(adaptive ABC) in this framework.  As exhibited in
Table~\ref{tab:2colors}(d), this last classifier does not decrease
dramatically the prior misclassification rates. But the errors of the
non-adaptive classifiers are already low and the error of any
classifier is bounded from below, as explained in
Proposition~\ref{pro:compare.knn.Bayes}. Interestingly though, the
adaptive classifier relies on $\widehat{m}(S_{2D}(y))$ (instead of the
most informative $\widehat{m}(S_{6D}(y))$) to take the final decision
at about $60 \%$ of the images of our test reference table of size
$30,000$.

\paragraph{Second experiment.} The framework was designed here to
study the limitations of our approach based on the connected
components of induced graphs. The number of latent colors is indeed
relatively high and the noise process do not rely on any ordering of
the colors to perturbate the pixels. Table~\ref{tab:16colors}
indicates the difficulty of capturing relevant information with the
geometric summaries we propose. Only the sharpness introduced by a
training reference table composed of $100,000$ simulations
distinguishes $\widehat{m}(S_{4D}(y))$ and $\widehat{m}(S_{6D}(y))$
from the basic classifier $\widehat{m}(S_{2D}(y))$. This conclusion is
reinforced by the low value of number of neighbors after the
calibration process, namely $k=16, 5$ and $5$ for
$\widehat{m}(S_{2D}(y))$, $\widehat{m}(S_{4D}(y))$ and
$\widehat{m}(S_{6D}(y))$ respectively. Hence we do not display in the
paper other diagnosis plots based on the prior error rates or the
conditional error rates, which led us to the same conclusion. The
adaptive ABC algorithm did not improve any of these results.

\begin{table}[b]
  \centering
  \caption{Evaluation of the prior error rate on a test reference
    table of size $30,000$ in the third experiment.}
  \label{tab:continuous}
  \textbf{Prior error rates}

  \begin{tabular}{cccc}
    \hline
    \textbf{Train size} & $\bf 5,000$ & $\bf 100,000$ 
    \\
    \hline
    2D statistics & $14.2 \%$  & $13.8 \%$
    \\
    4D statistics & $10.8 \%$ & $9.8 \%$
    \\
    6D statistics & $8.6 \%$  & $6.9 \%$ 
    \\
    Adaptive ABC & $8.2 \%$ & $6.7 \%$
    \\
    \hline
  \end{tabular}

\end{table}

\begin{figure*}
  \centering
    \includegraphics[width=.48\textwidth]{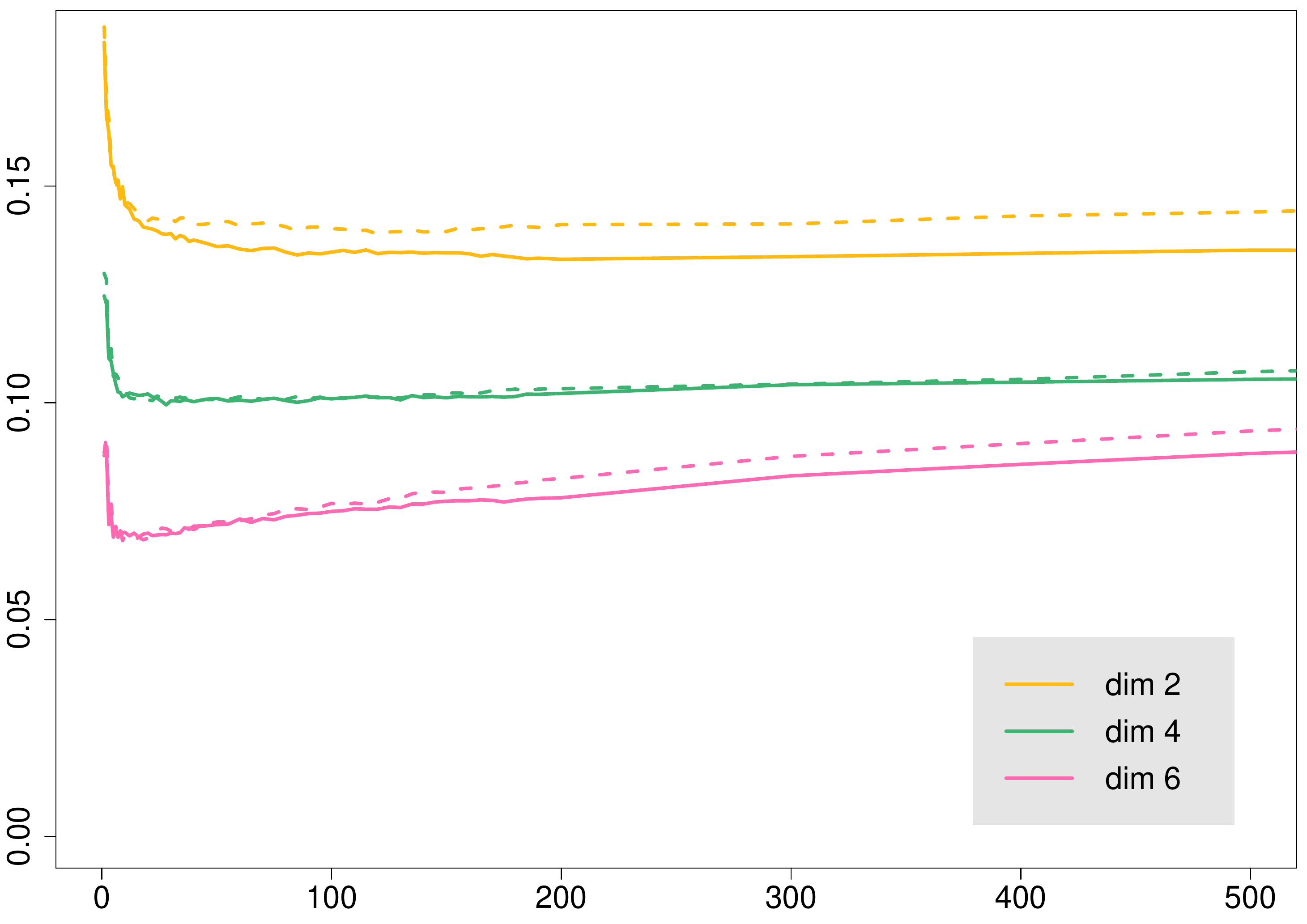} \hfill
    \includegraphics[width=.48\textwidth]{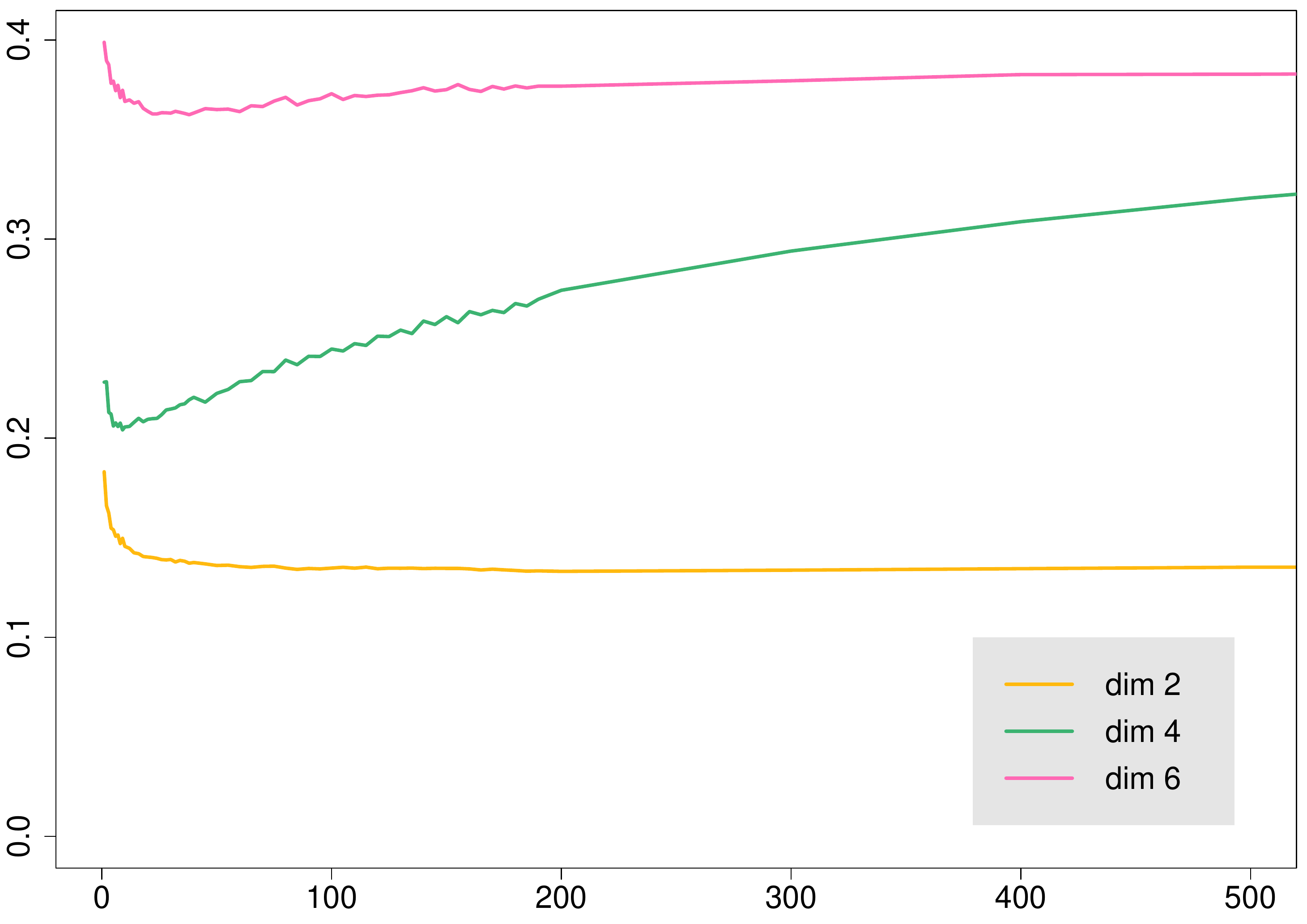}

    \hspace{.2\textwidth} (a) \hspace{.45\textwidth} (b) \hspace{.2\textwidth} ~

    \includegraphics[width=.48\textwidth]{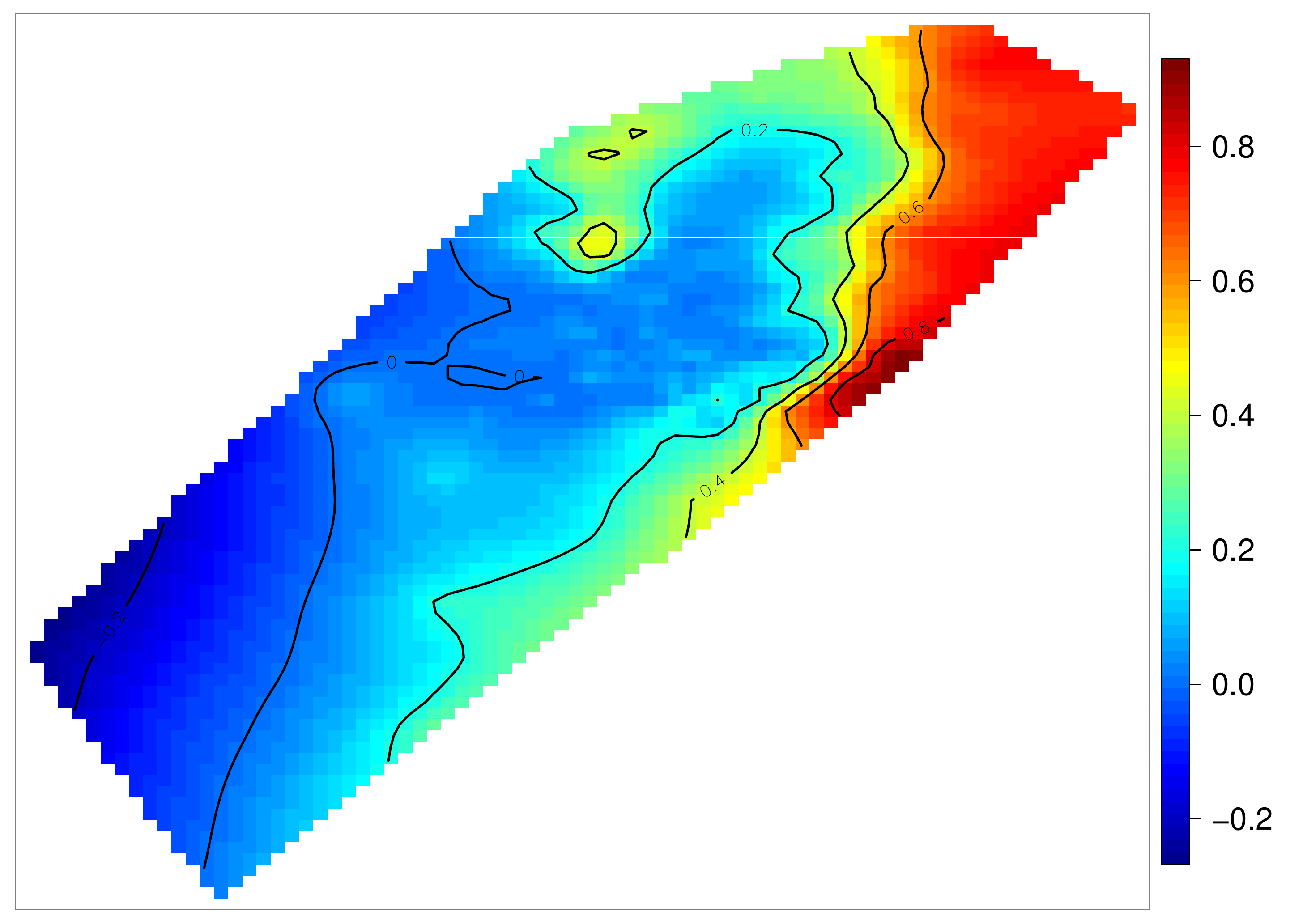}

    (c) 

   \caption{\textbf{Third experiment results.} (a) Prior error rates
     (vertical axis) of ABC with respect to the number of nearest neighbors
     (horizontal axis) trained on a reference table of size $100,000$
     (solid lines) or $50,000$ (dashed lines), based on the 2D, 4D and
     6D summary statistics.  (b) Prior error rates of ABC based on the
     2D summary statistics compared with 4D and 6D summary statistics
     including additional ancillary statistics. (c) Evaluation of the
     local error on a 2D surface }
  \label{fig:continuous}
\end{figure*}

\paragraph{Third experiment.} The framework here includes a continuous
noise process as described at the end of Section~\ref{sub:Potts}. We
reproduced the entire diagnosis process performed in the first
experiment and we obtained the results given in Fig.~\ref{fig:continuous} and
Table~\ref{tab:continuous}. The most noticeable difference is the
extra information carried by the $U(\mathscr G, y)$-statistics,
representing the size of the largest connected component, and the
adaptive ABC relie on the simplest $\widehat{m}(S_{2D}(y))$ in about
$30 \%$ of the data space (measured with the prior marginal
distribution in $y$). Likewise, the gain in misclassification errors is not
spectacular, albeit positive.


\section{Conclusion and perspective}

In the present article, we considered ABC model choice as a
classification problem in the framework of the Bayesian
paradigm (Section \ref{subsec-abc}) and provided a local error in order to
assess the accuracy of the classifier at $\yobs$ (Sections
\ref{sub:local} and \ref{sub:estimate.local.error}). We derived then
an adaptive classifier (Section \ref{sub:adaptive}) which is an
attempt to fight against the curse of dimensionality locally around
$\yobs$. This method contrasts with most projection methods which are
focused on parameter estimation \citep{blum2013}. Additionally, most
of them perform a global trade off between the dimension and the
information of the summary statistics over the whole prior domain,
while our proposal adapts the dimension with respect to $\yobs$
\citep[see also the discussion about the posterior loss approach
in][]{blum2013}.  Besides the inequalities of Proposition 
\ref{pro:compare.knn.Bayes} complement modestly the analysis of \citet{marin14} on ABC 
model choice. Principles of our proposal are well founded by avoiding the
well-known optimism of the training error rates and by resorting to
validation and test reference tables in order to evaluate the error
practically. And, finally, the machine learning viewpoint gives an
efficient way to calibrate the threshold of ABC (Section
\ref{subsec-abc}). 

Regarding latent Markov random fields, the proposed method of
constructing summary statistics based on the induced graphs (Section
\ref{sub:geometry}) yields a promising route to construct relevant
summary statistics in this framework. This approach is very intuitive
and can be reproduced in other settings. For instance, if the goal of
the Bayesian analysis is to select between isotropic latent Gibbs
models and anisotropic models, the averaged ratio between the width
and the length of the connect components or the ratio of the width and
the length of the largest connected components can be relevant
numerical summaries. We have also explained how to adapt the method to
a continuous noise by performing a quantization of the observed values
at each site of the fields (Section \ref{sub:geometry}). And the
detailed analysis of the numerical results demonstrates that the
approach is promising.  However the results on the $16$ color example
with a completely disordered noise indicate the limitation of the
induced graph approach. We believe that there exists a road we did not
explore above with an induced graph that add weights on the edges of
the graph according to the proximity of the colors, but the grouping
of sites on such weighted graph is not trivial.

The numerical results (Section \ref{sub:numerical}) highlighted that
the calibration of the number of neighbors in ABC provides better
results (in terms of misclassification) than a threshold set as a
fixed quantile of the distances between the simulated and the observed
datasets \citep[as proposed in][]{surveyABC}. Consequently, we can
reduce significantly the number of simulations in the reference table
without increasing the misclassification error rates. This represents
an important conclusion since the simulation of a latent Markov random
field requires a non-negligible amount of time.  The gain in
misclassification rates of the new summaries is real but not
spectacular and the adaptive ABC algorithm was able to select the most
performant classifier.


\section*{Acknowledgments}
The three author were financially supported by the Labex NUMEV.
We are grateful to Jean-Michel Marin for his constant feedback and
support. Some part of the present work was presented at MCMSki 4 in
January 2014 and benefited greatly from discussions with the
participants during the poster session. We would like to thank
the anonymous referrees and the Editors whose
valuable comments and insightful suggestions led to an improved
version of the paper. 

\bibliographystyle{abbrvnat}
\bibliography{biblio}
\end{document}